\crefname{section}{Section}{Sections}
\crefname{subsection}{\S}{\S\S}
\theoremstyle{plain}
\newtheorem{lemma}{Lemma}[section]
\newtheorem{proposition}[lemma]{Proposition}
\newtheorem{corollary}[lemma]{Corollary}
\newtheorem{theorem}[lemma]{Theorem}
\theoremstyle{nonumberplain}
\theoremstyle{plain}
\newtheorem{definition}[lemma]{Definition}
\newtheorem{example}[lemma]{Example}
\newtheorem{remark}[lemma]{Remark}
\crefname{definition}{definition}{definitions}
\crefname{ex}{example}{examples}
\crefname{remark}{remark}{remarks}
\crefname{convention}{convention}{conventions}
\crefname{notation}{notation}{notations}
\crefname{question}{question}{questions}
\crefname{lemma}{lemma}{lemmas}
\crefname{proposition}{proposition}{propositions}
\crefname{corollary}{corollary}{corollaries}
\crefname{theorem}{theorem}{theorems}
\crefname{enumi}{}{}
\crefname{assumption}{assumption}{Assumptions}
\crefname{equation}{}{}
\numberwithin{equation}{section}
\theoremstyle{nonumberplain}
\newtheorem{proof}{Proof}
\newcommand\pf[1]{\newtheorem{#1}{Proof of \Cref{#1}}}
\newcommand\bC{{\mathbb C}}
\newcommand\bR{{\mathbb R}}
\newcommand\cM{{\mathcal M}}
\newcommand\cO{{\mathcal O}}
\newcommand\cU{{\mathcal U}}
\newcommand\cV{{\mathcal V}}
\DeclareMathOperator{\id}{id}
\newcommand{\qedhere}{\mbox{}\hfill\ensuremath{\blacksquare}}
\title{Quantum isometries and loose embeddings}
\author{Alexandru Chirvasitu}
\begin{document}

\date{}

\newcommand{\Addresses}{{
  \bigskip
  \footnotesize

  \textsc{Department of Mathematics, University at Buffalo, Buffalo,
    NY 14260-2900, USA}\par\nopagebreak \textit{E-mail address}:
  \texttt{achirvas@buffalo.edu}

}}

\maketitle

\begin{abstract}
  We show that countable metric spaces always have quantum isometry groups, thus extending the class of metric spaces known to possess such universal quantum-group actions.

  Motivated by this existence problem we define and study the notion of loose embeddability of a metric space $(X,d_X)$ into another, $(Y,d_Y)$: the existence of an injective continuous map that preserves both equalities and inequalities of distances. We show that $0$-dimensional compact metric spaces are ``generically'' loosely embeddable into the real line, even though not even all countable metric spaces are.
\end{abstract}

\noindent {\em Key words: compact quantum group; Gromov-Hausdorff distance; isometry; Baire theorem; Baire space; covering dimension}

\vspace{.5cm}

\noindent{MSC 2010: 30L05; 46L85}


\section*{Introduction}

Let $(X,d)$ be a compact metric space and 
\begin{equation}\label{eq:act}
  \alpha:X\times G\to X
\end{equation}
an isometric action of a compact group $G$ on $X$. Every metric space admits a {\it universal} $\alpha$, in the sense that every compact-group isometric action $X\times H\to X$ arises via a unique compact-group morphism $H\to G$. Indeed, one simply takes $G=\mathrm{Iso}(X,d)$ (the group of all self-isometries of $X$), equipped with the uniform topology.  

The present note is partly motivated by the question of whether such universal isometric actions on compact metric spaces $X$ exist in the context of compact {\it quantum} groups. To make sense of this one dualizes \Cref{eq:act} to a unital $C^*$-algebra morphism
\begin{equation*}
  \rho:C(X)\to C(X)\otimes Q
\end{equation*}
where $Q$ is a compact quantum group (see \Cref{subse.cqg} below for detailed definitions). We then have a concept of $\rho$ being {\it isometric} (\cite[Definition 3.1]{metric} and \Cref{def:qiso} below), and can similarly pose in fairly guessable manner the question of whether there is a ``largest'' isometric quantum action (\Cref{def:univ}). If there is, we refer to the quantum group in question as the {\it quantum isometry group} of $(X,d)$.

One of the main results of \cite{metric} (Theorem 4.8 therein) is that quantum isometry groups always exist for compact metric spaces isometrically embeddable in some finite-dimensional Hilbert space. More is true however: according to (a slightly paraphrased version of) \cite[Corollary 4.9]{metric}, we have

\begin{theorem}\label{th:orig}
  A compact metric space $(X,d)$ has a quantum isometry group provided it embeds into some finite-dimensional Hilbert space by a continuous one-to-one map that preserves equalities and differences of distances.
  \qedhere
\end{theorem}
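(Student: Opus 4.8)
The plan is to reduce the statement to the main theorem of \cite{metric} (Theorem 4.8 therein), which already guarantees a quantum isometry group whenever the space embeds \emph{isometrically} into a finite-dimensional Hilbert space. Write $f\colon X\to H$ for the hypothesized loose embedding and pull the Euclidean metric back along it, setting $e(x,y):=\|f(x)-f(y)\|$. Because $f$ is injective this $e$ is a genuine metric, and $f$ realizes $(X,e)$ as an isometrically embedded subspace of the finite-dimensional $H$; Theorem 4.8 of \cite{metric} therefore applies to $(X,e)$ and furnishes a quantum isometry group for it. The whole problem thus becomes showing that $(X,d)$ and $(X,e)$ admit \emph{the same} isometric quantum actions, so that a universal one for the latter is automatically a universal one for the former.

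First I would check that $d$ and $e$ are interchangeable at the level of the underlying operator algebra. Since $X$ is compact, $f$ continuous and injective, and $H$ Hausdorff, $f$ is a homeomorphism onto its image, so $d$ and $e$ induce the same topology and $C(X)$ is unambiguous. Next, the loose-embedding hypothesis says precisely that $f$ preserves equalities and inequalities of distances; equivalently, the assignment $d(x,y)\mapsto e(x,y)$ is a well-defined strictly increasing bijection $g$ from the value set $D:=d(X\times X)$ onto $E:=e(X\times X)$. As a strictly monotone bijection between compact subsets of $\bR$, $g$ is automatically a homeomorphism with $g(0)=0$, and by construction $e=g\circ d$ as continuous functions on $X\times X$; extending $g$ to a continuous increasing function on $[0,\infty)$ by linear interpolation is routine and lets one regard $e=g(d)$ and $d=g^{-1}(e)$ as continuous functional calculus applied to the self-adjoint elements $d,e\in C(X\times X)$.

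The heart of the argument is then a one-line functional-calculus observation. Unwinding \Cref{def:qiso}, a coaction $\rho\colon C(X)\to C(X)\otimes Q$ is isometric for $(X,d)$ exactly when its induced coaction $\rho^{(2)}$ on $C(X\times X)$ fixes the distance element, $\rho^{(2)}(d)=d\otimes 1$, and likewise for $e$. Since $\rho^{(2)}$ is a unital $*$-homomorphism it intertwines continuous functional calculus, so $\rho^{(2)}(e)=\rho^{(2)}\bigl(g(d)\bigr)=g\bigl(\rho^{(2)}(d)\bigr)$; hence $\rho^{(2)}(d)=d\otimes 1$ forces $\rho^{(2)}(e)=g(d\otimes 1)=e\otimes 1$, and the reverse implication follows symmetrically using $g^{-1}$. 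Consequently the categories of isometric actions of $(X,d)$ and of $(X,e)$ coincide, a universal object for $(X,e)$ is a universal object for $(X,d)$, and the existence of the former (by Theorem 4.8 of \cite{metric}) completes the proof.

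I expect the only genuinely delicate point to be the claim that the isometric condition of \Cref{def:qiso} is invariant under such order-preserving reparametrizations of the metric, i.e.\ that it registers only the equality/inequality structure of $d$ rather than its numerical values. Once that definition is rephrased as preservation of the distance element $d\in C(X\times X)$, this is exactly what the functional-calculus step delivers; the topological bookkeeping around $g$ is routine by comparison.
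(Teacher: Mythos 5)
First, note that the paper itself offers no proof of this statement: Theorem~\ref{th:orig} is quoted (``slightly paraphrased'') from \cite[Corollary 4.9]{metric}, so the only comparison available is with your reconstruction of how that corollary follows from Theorem 4.8 of the same paper. Your overall strategy --- pull the Euclidean metric back along the loose embedding $f$ to get $e=g\circ d$ for a bijection $g$ between the value sets, apply Theorem 4.8 to $(X,e)$, and argue that $(X,d)$ and $(X,e)$ admit the same isometric quantum actions --- is the right reduction, and the topological bookkeeping is mostly fine. One small inaccuracy: the loose-embedding condition \Cref{eq:le} preserves only \emph{equalities} of distances, not their order, so $g$ need not be increasing. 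This is harmless: continuity of $g$ and $g^{-1}$ on the compact value sets $D=d(X\times X)$ and $E=e(X\times X)$ follows from a compactness/subsequence argument, and continuity on the spectrum is all the functional calculus requires.

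The genuine gap is in the step you yourself flag as delicate. There is no ``induced coaction $\rho^{(2)}$ on $C(X\times X)$'': for noncommutative $Q$ the natural candidate $a\otimes b\mapsto \rho(a)_{13}\rho(b)_{23}$ is neither multiplicative nor $*$-preserving (the legs of $\rho(a)$ and $\rho(b)$ in $Q$ do not commute), which is precisely why \Cref{def:qiso} is formulated pointwise with the antipode rather than as $\rho^{(2)}(d)=d\otimes 1$. Your identity $\rho^{(2)}(g(d))=g(\rho^{(2)}(d))$ is therefore applied to a map that is not a $*$-homomorphism, and the argument as written does not go through. It is repairable: work instead with the self-adjoint elements $d_x=d(x,\cdot)\in C(X)$, note that $e_x=g(d_x)$ by continuous functional calculus, and observe that $\rho$, the evaluations $\mathrm{ev}_y\otimes\id$, and the antipode $\kappa$ (a bounded $*$-anti-automorphism in the Kac-type setting the paper works in; an anti-homomorphism still intertwines functional calculus of a single self-adjoint element) all commute with $g(\cdot)$. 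Applying $g$ to both sides of $\rho(d_x)(y)=\kappa(\rho(d_y)(x))$ then yields the same identity for $e$, and $g^{-1}$ gives the converse, after which universality transfers as you say. With that substitution your reduction is correct.
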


The phrasing above is a bit awkward; what is meant is that we have a map $f:X\to \bR^n$ (for some $n$) such that for any four points $x,z,x',z'$ in $X$ the distances $d(x,y)$ and $d(x',y')$ are equal if and only if their Euclidean counterparts
\begin{equation*}
  |fx-fy|\text{ and }|fx'-fy'|
\end{equation*}
are equal. This motivates the natural question (now entirely separate of the issue of quantum actions) of which compact metric spaces admit such {\it loose embeddings} (a term we introduce in \Cref{def.lo-emb}) into Euclidean spaces. The term is meant to convey the fact that such an embedding demands much less that distance preservation; the defining condition \Cref{eq:le} is essentially combinatorial in nature, concerned, as it is, only with the pattern of equalities between pairwise distances in $(X,d)$. 

The main result of the short \Cref{se.qiso} is

\begin{theorem}[\Cref{th.cnt}]
 All countable compact metric spaces have quantum isometry groups
\end{theorem}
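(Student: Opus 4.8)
The obvious first move is to apply \Cref{th:orig}, reducing the problem to producing a loose embedding of $(X,d)$ into some $\bR^n$. This seems bound to fail as a strategy: an infinite compact subset of a Hilbert space need not lie in any finite-dimensional subspace, and the rigidity of the loose condition (it must reproduce \emph{every} coincidence of distances exactly) makes a finite-dimensional realization implausible in general. So the plan is instead to build the quantum isometry group by hand, exploiting the one feature that distinguishes countable --- indeed, $0$-dimensional --- compact spaces from arbitrary ones: $C(X)$ is generated by its projections. Concretely, $X$ is totally disconnected, so it carries a countable Boolean algebra $\cB$ of clopen sets whose indicator functions span a dense $*$-subalgebra $\cO(X)\subset C(X)$, exhibiting $C(X)$ as an AF algebra $\varinjlim_n\bC^{k_n}$ along a refining sequence of finite clopen partitions $\cP_n$ with mesh tending to $0$.

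The key point is that, relative to this basis of projections, an isometric coaction $\rho\colon C(X)\to C(X)\otimes Q$ should be encoded by \emph{bounded} data. For each $p\in\cB$ the image $\rho(p)$ is a projection, and on the algebraic core of the coaction it ought to take the form $\rho(p)=\sum_j p_j\otimes a_{p,j}$ with each coefficient $a_{p,j}$ a projection in $Q$, hence of norm at most $1$. The requirements that $\rho$ be a unital $*$-homomorphism and a coaction translate into quantum-permutation-type relations among the $a_{p,j}$ (orthogonality and completeness along rows and columns, and compatibility with the Boolean operations of $\cB$), while the isometry condition of \cite[Definition~3.1]{metric} adds relations recording the pattern of distances among the clopen pieces. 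I would then form the universal unital $C^*$-algebra $A_u$ on the symbols $a_{p,j}$ subject to all of these relations. Because the generators are forced to be projections they are automatically bounded, so $A_u$ exists and is nontrivial; this boundedness, unavailable for a general positive-dimensional compact space, is the structural reason the construction can succeed here.

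It then remains to promote $A_u$ to a compact quantum group and to check that it is the universal one. The comultiplication is manufactured, as usual, from the fact that the composite of two isometric coactions is again isometric: applying the universal property of $A_u$ to the composite of the tautological coaction with itself yields a morphism $A_u\to A_u\otimes A_u$, after which one verifies coassociativity, the counit, and the cancellation and density axioms. Universality of the resulting $Q$ is immediate: any isometric coaction by some $Q'$ has projection coefficients satisfying the defining relations, hence factors uniquely through $A_u$, giving the required morphism $Q'\to Q$ and identifying $Q$ as the quantum isometry group. I find it clarifying to read the whole construction through the AF filtration, $A_u=\varinjlim_n A_n$, where $A_n$ is the analogous universal algebra for the \emph{finite} metric quotient $X_n=X/\cP_n$, so that $Q=\varprojlim_n Q_n$ is an inverse limit of quantum isometry groups of finite metric spaces, whose existence is classical.

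The main obstacle, and where I expect the real work to lie, is exactly the passage from the universal $C^*$-algebra $A_u$ to an honest compact quantum group carrying a \emph{universal isometric} coaction. Three points need care. First, one must show that the clopen projections lie in the algebraic core of any isometric coaction, so that the coefficients $a_{p,j}$ make sense and are genuinely projections. Second, one must check that the isometry relations are expressible in these coefficients over the clopen basis and are preserved under the inductive limit, so that ``isometric'' at the level of $C(X)$ is detected level-by-level on the finite quotients $X_n$; this is the step that uses $d$ in an essential way and that could misbehave if the $\cP_n$ failed to refine to points. Third, one must verify that the comultiplication built from composing coactions satisfies the compact-quantum-group axioms and that the tautological coaction is honestly isometric and universal, rather than merely a coaction of the underlying $C^*$-bialgebra. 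Once boundedness-from-projections has been secured, these are the familiar verifications underpinning the existence of quantum symmetry groups of finite and discrete structures, now carried along an AF tower; countability of $X$ keeps every algebra in sight separable and the generating family of projections countable, which is all the size control the argument requires.
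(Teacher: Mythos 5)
Your proposal does not follow the paper's route and, as it stands, has a genuine gap at its core. The construction hinges on writing an arbitrary isometric coaction as $\rho(p)=\sum_j p_j\otimes a_{p,j}$ for $p$ ranging over the indicators of a fixed refining sequence of clopen partitions $\cP_n$, and then taking the universal $C^*$-algebra on the coefficients $a_{p,j}$. But those coefficients are only defined if $\rho$ maps each finite-dimensional subalgebra $C(X/\cP_n)$ into $C(X/\cP_n)\otimes Q$, i.e.\ if the action preserves the chosen filtration --- and there is no reason it should: already a classical isometry of a $0$-dimensional compact space can carry a block of $\cP_n$ to a clopen set not in the Boolean algebra generated by $\cP_n$. (Membership of the clopen projections in the Podle\'s core, which you flag as the first point needing care, would only give $\rho(p)=\sum_i f_i\otimes q_i$ with finitely many \emph{arbitrary} $f_i\in C(X)$, not indicators drawn from the same partition.) If you impose preservation of the filtration as a relation, the resulting object is universal only among filtration-preserving actions and universality fails; if you do not, the generators of $A_u$ are undefined and the inverse-limit picture $Q=\varprojlim_n Q_n$ has no connecting maps to take a limit over. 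A telling symptom: your argument uses only $0$-dimensionality, never countability (beyond separability bookkeeping), so if it worked it would produce quantum isometry groups for \emph{all} totally disconnected compact metric spaces, e.g.\ arbitrary metrics on the Cantor set --- a statement the paper conspicuously does not claim.

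The paper's actual proof is much softer and uses countability in an essential and different way: a countable compact metric space has isolated points, each finite set $X_{\ge r}$ of points with no neighbor closer than $r$ is invariant under every isometric quantum action, and transfinitely iterating the Cantor--Bendixson derivative partitions $X$ into countably many finite pieces invariant under \emph{every} isometric action. Patching together invariant probability measures on these finite pieces yields a single probability measure $\mu$ on $X$ preserved by every isometric action, and then \cite[Theorem 5.4]{cg} (existence of a universal object among $\mu$-preserving isometric actions) finishes the proof. If you want to salvage your approach, the missing ingredient is precisely such a canonical, action-independent invariant structure; an arbitrarily chosen clopen filtration is not canonical enough to be invariant.
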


We then transition to the loose-embeddability material (\Cref{def.lo-emb}). Note however that by \Cref{ex.pth} that condition is not {\it necessary} for the existence of quantum isometry groups (i.e. the converse to \Cref{th:orig} is not valid). 

Although \Cref{ex.pth} shows that loose embeddability is not automatic even for countable metric spaces, \Cref{th.gen0} proves that ``most'' $0$-dimensional (i.e. totally disconnected) compact metric spaces are loosely embeddable. Formally, \Cref{th.gen0} (with a fragment of \Cref{pr.m0baire} thrown in for clarity) reads:

\begin{theorem}
  The set $\cM_{\le 0}$ of isometry classes of $0$-dimensional compact metric spaces is a dense $G_{\delta}$ in the complete Gromov-Hausdorff metric space $\cM$ of isometry classes of {\it all} compact metric spaces, and hence $\cM_{\le 0}$ is a Baire space. 
  
Furthermore, the complement in $\cM_{\le 0}$ of the set of isometry classes of loosely embeddable $0$-dimensional compact metric spaces is of first Baire category.   
  \qedhere
\end{theorem}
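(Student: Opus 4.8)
The plan is to handle the two assertions in turn, first establishing the dense-$G_\delta$/Baire claim (the ``fragment of \Cref{pr.m0baire}'') and then using the resulting Baire space to run a genericity argument for loose embeddability.

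\emph{Part 1.} Completeness of $(\cM,d_{GH})$ is classical, so I only need density and the $G_\delta$ property of $\cM_{\le 0}$. Density is immediate: a finite $\varepsilon$-net of any compact $X$ is a finite metric space, hence $0$-dimensional and within $\varepsilon$ of $X$ in Gromov--Hausdorff distance. For the $G_\delta$ property I would use the standard fact that a compact metric space is $0$-dimensional if and only if for every $\varepsilon>0$ it admits a finite partition into clopen sets of diameter $<\varepsilon$. Writing $U_n$ for the set of classes admitting such a partition with $\varepsilon=1/n$, one gets $\cM_{\le 0}=\bigcap_n U_n$, and the one technical point is that each $U_n$ is open. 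Given a clopen partition of $X$ into pieces of diameter $<1/n$, compactness forces a positive gap $\rho$ between distinct pieces; transporting the partition across a correspondence to any $Y$ with $d_{GH}(X,Y)$ small relative to $\rho$ and the diameter slack yields a clopen partition of $Y$ with the same diameter bound, and I would check disjointness, clopenness and the diameter estimate directly from the distortion of the correspondence. Alexandrov's theorem then makes the dense $G_\delta$ set $\cM_{\le 0}$ completely metrizable, hence Baire.

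\emph{Part 2.} It suffices to exhibit a comeager subset of $\cM_{\le 0}$ all of whose members loosely embed into $\bR$, for then the set $\cL$ of loosely embeddable classes contains a comeager set and its complement in $\cM_{\le 0}$ is of first category. My candidate is $\cG$, the set of classes all of whose pairwise distances are distinct. I would prove two things: that every $0$-dimensional $X\in\cG$ loosely embeds into $\bR$, and that $\cG$ is a dense $G_\delta$ in $\cM_{\le 0}$.

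\emph{$\cG$ embeds loosely (the crux, and the step I expect to be hardest).} The key input is Mycielski's theorem on independent perfect sets: applied to the countably many nowhere-dense relations $\sum_i c_i t_i=0$ indexed by nonzero integer vectors, it yields a Cantor set $P\subseteq\bR$ that is linearly independent over $\bQ$. Every $0$-dimensional compact metric space embeds homeomorphically into the Cantor set, hence into $P$; composing with $P\subseteq\bR$ gives an injective continuous $f:X\to\bR$ with $f(X)\subseteq P$. Because $P$ is $\bQ$-independent, a difference $f(x)-f(y)$ determines the unordered pair $\{x,y\}$, so all pairwise distances of $f(X)$ are distinct. When $X\in\cG$, both $X$ and $f(X)$ have all distances distinct, so for genuine pairs the equalities $d(x,y)=d(x',y')$ and $|f(x)-f(y)|=|f(x')-f(y')|$ each hold only when $\{x,y\}=\{x',y'\}$; hence $f$ preserves equalities and distinctness of distances, i.e.\ is a loose embedding of $X$ into $\bR$ in the sense underlying \Cref{th:orig} with $n=1$.

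\emph{$\cG$ is a dense $G_\delta$, and conclusion.} Density follows as in Part 1, perturbing a finite net slightly so that its finitely many distances become distinct without leaving $\cM_{\le 0}$. For the $G_\delta$ property I would write the complement as $\bigcup_n C_{1/n}$, where $C_\eta$ consists of the classes carrying a $4$-tuple $(x,y,x',y')$ with $d(x,y)=d(x',y')\ge\eta$ whose two pairs are $\eta$-far from coinciding, i.e.\ $\min\big(\max(d(x,x'),d(y,y')),\,\max(d(x,y'),d(y,x'))\big)\ge\eta$. Each defining condition is closed and, via convergence of finite configurations along Gromov--Hausdorff correspondences, $C_\eta$ is closed in $\cM$; moreover any genuine coincidence of distances between two genuinely distinct pairs is witnessed at some scale $\eta=1/n$, so the union equals $\cM\setminus\cG$. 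Thus $\cG$ is $G_\delta$, and $\cG\cap\cM_{\le 0}$ is a comeager subset of the Baire space $\cM_{\le 0}$ consisting of loosely embeddable spaces; its complement, which contains all non-loosely-embeddable classes, is therefore of first category. The main obstacles are the two items flagged above: the openness of each $U_n$ in Part 1, and, more seriously, the independent-Cantor-set embedding that converts the purely combinatorial hypothesis ``distinct distances'' into an actual continuous loose embedding into $\bR$.
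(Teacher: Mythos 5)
Your proposal is correct, and its overall architecture matches the paper's: both parts reduce the genericity of loose embeddability to the genericity of metrics that are injective on unordered pairs (the paper's \Cref{def.inj}), established by writing the relevant sets as countable intersections of open dense sets built from fine clopen partitions with quantified separation (your $U_n$ and $C_\eta$ play the roles of the paper's $\cM^N_M$ and of the complement of $\bigcup_M\cM_{N,M}$, respectively). Where you genuinely diverge is in the crux lemma --- that a $0$-dimensional compact space with injective distance function loosely embeds in $\bR$ (\Cref{pr.inj-suff}). The paper proves this by a recursive branching construction: clopen cells of shrinking diameter are matched to nested compact intervals chosen ``generically'' at each stage so that the induced real-line distance on the image is injective. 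You instead invoke Mycielski's theorem (or any construction of a $\bQ$-linearly independent Cantor set $P\subset\bR$), embed $X$ topologically into $P$, and observe that $\bQ$-independence forces all pairwise distances in $P$ to be distinct, so \emph{any} topological embedding into $P$ of a space with injective metric is automatically loose. Your route buys uniformity (a single universal target $P$ works for all such $X$, and no stage-by-stage genericity bookkeeping is needed) at the cost of importing a less elementary input; the paper's route is self-contained but leaves the consistency of the generic interval choices across stages to the reader. Two small points you should make explicit in a write-up: the degenerate $4$-tuples in \Cref{eq:le} (where $x=x'$ or $z=z'$) are handled by injectivity of $f$ alone, and the disjoint open sets $V_i$ you produce on a nearby $Y$ are clopen because they form a finite disjoint open cover --- both are routine, and neither affects correctness.
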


\subsection*{Acknowledgements}

This work was partially supported by NSF grants DMS-1801011 and DMS-2001128.

I am grateful for insightful comments by Hanfeng Li, Vern Paulsen and last but decidedly not least the anonymous referee.

\section{Preliminaries}\label{se.prel}

Unless specified otherwise, all algebras and morphisms between them are assumed unital. We will frequently have to take tensor products of $C^*$-algebras, in which case the tensor symbol always denotes the minimal (or {\it spatial}) tensor product \cite[Definition 3.3.4]{bo}. On the other hand, between plain, non-topological algebras `$\otimes$' denotes the usual, algebraic tensor product. 

\subsection{Metric geometry}\label{subse.mtr}

We gather some background material on metric geometry and point-set topology with \cite{bbi,mun,eng-dim} serving as references. Recall (e.g. \cite[Definitions 7.3.1 and 7.3.10]{bbi})

\begin{definition}\label{def.gh}
  Let $(Z,d)$ be a metric space and $X,Y\subseteq Z$ two subsets. The {\it Hausdorff distance} $d_{H,Z}(X,Y)$ is the infimum over all $\varepsilon>0$ such that $X$ and $Y$ are each contained in the other's $\varepsilon$-neighborhood.

  For two metric spaces $(X,d_X)$ and $(Y,d_Y)$ the {\it Gromov-Hausdorff distance} is defined as
  \begin{equation*}
    d_{GH}(X,Y) = \inf d_{H,Z}(X,Y)
  \end{equation*}
  where the infimum is taken over all metric spaces $Z$ housing $X$ and $Y$ as isometrically embedded subspaces.
\end{definition}

Gromov-Hausdorff distance is an actual metric on the set of isometry classes of compact metric spaces \cite[Theorem 7.3.30]{bbi}, and we write $(\cM,d_{GH})$ for the resulting metric space. By abuse of notation, we often identify a compact metric space with its corresponding point of $\cM$ (i.e. its isometry class).  

We will also need the notion of {\it dimension} for a topological space. As explained throughout \cite[Chapter 1]{eng-dim}, the various competing definitions do not, in general agree. They do, however, for compact metric spaces (or more generally, separable ones) \cite[Theorem 1.7,7]{eng-dim}. For that reason, we provide one of the definitions (of what is usually called the {\it covering dimension}; see \cite[Definition 1.6.7]{eng-dim} or \cite[\S 50]{mun}) and omit all qualifiers preceding the term `dimension'. 

\begin{definition}\label{def.dim}
  Let $X$ be a compact metrizable topological space.

  A finite open cover $\cU=\{U_i\}$ of $X$ has {\it order $n\ge 0$} if there are $n+1$ mutually intersecting sets $U_i$ but no $n+2$ sets $U_i$ intersect. The order is infinite if no such $n$ exists.

$X$ is said to have {\it dimension} $\dim X=n$ if every finite open cover has a finite refinement of order $\le n$, and $n$ is the smallest integer with this property ($\dim X=\infty$ if no such integers exist). By convention, $\dim \emptyset=-1$. 
\end{definition}

A few remarks worth keeping in mind:
\begin{itemize}
\item for manifolds $\dim X$ coincides with the standard concept.
\item for compact metric spaces $0$-dimensionality means total disconnectedness, i.e. the existence of a basis consisting of clopen sets \cite[Theorem 1.4.5]{eng-dim}.
\item for separable metric spaces dimension is (as expected) monotonic with respect to inclusions \cite[Theorem 3.1.19]{eng-dim}.
\end{itemize}

We indicate dimension constraints for elements of $\cM$ by a subscript: $\cM_0\subset \cM$ for instance denotes the set (of isometry classes) of $0$-dimensional compact metric spaces, $\cM_{\le n}$ that of metric spaces of dimension $\le n$, etc.

$(\cM,d_{GH})$ is known to be a {\it complete} (separable) metric space and hence, by the Baire category theorem (\cite[Theorem 48.2]{mun}), a {\it Baire space} in the sense of \cite[\S 48]{mun}: countable intersections of dense open subsets are again dense. 

Baire's theorem suggests that sets containing countable intersections of dense open sets should be regarded as ``large''. We recall the relevant language:

\begin{definition}\label{def.baire}
  Let $X$ be a topological space. A subset $Y\subseteq X$ is {\it meager} or {\it of first category} if it is contained in a countable union of nowhere dense closed subsets of $X$.

  A subset that is not meager is {\it of second category}, and the complement of a meager set is {\it residual}.
\end{definition}

\begin{remark}
  In Baire spaces being residual is equivalent to containing a dense {\it $G_{\delta}$ set}, i.e. countable intersection of open subsets. 
\end{remark}

The Baire theorem applies not only to complete metric spaces but to $G_{\delta}$ subsets of the latter (\cite[\S 48, Exercise 5]{mun}). This makes the following result relevant.

\begin{proposition}\label{pr.m0baire}
  For every non-negative integer $n$, the subspace $\cM_{\le n}\subset \cM$ consisting of (isometry classes of) compact metric spaces of dimension $\le n$ is dense $G_{\delta}$, and hence a Baire space.
\end{proposition}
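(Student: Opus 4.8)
The plan is to prove the two asserted properties separately. Density is elementary; the $G_{\delta}$ claim carries the real content; and the Baire conclusion is then automatic from the Baire theorem for $G_{\delta}$ subspaces of a complete metric space quoted just above.

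For density, I would observe that every finite metric space is $0$-dimensional (it is discrete, hence has a basis of clopen singletons), so finite spaces all lie in $\cM_{\le 0}\subseteq\cM_{\le n}$. They are moreover dense in $\cM$: given a compact $(X,d)$ and $\varepsilon>0$, any finite $\varepsilon$-net $F\subseteq X$ satisfies $d_{GH}(X,F)\le d_{H,X}(F,X)\le\varepsilon$, using $X$ itself as the ambient space. Thus $\cM_{\le 0}$, and a fortiori $\cM_{\le n}$, is dense.

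For the $G_{\delta}$ property I would invoke the standard characterization of covering dimension for compact metric spaces (see \cite{eng-dim}): $\dim X\le n$ if and only if for every $\varepsilon>0$ the space $X$ admits a finite open cover of mesh $<\varepsilon$ and order $\le n$ — one direction by refining the cover by small balls, the other by Lebesgue numbers. Setting
\[
  V_k:=\{X\in\cM: X\text{ admits a finite open cover of mesh }<1/k\text{ and order }\le n\},
\]
this characterization yields $\cM_{\le n}=\bigcap_{k\ge 1}V_k$, so it remains to show each $V_k$ is open in $(\cM,d_{GH})$. Fix $X\in V_k$ with witnessing cover $\cU=\{U_i\}_{i=1}^m$, and pass to a closed shrinking $\{F_i\}$ (so $F_i\subseteq U_i$, $\bigcup_i F_i=X$), which retains mesh $<1/k$ and order $\le n$. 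The main obstacle is that the order of a cover is not obviously preserved when $\cU$ is transported to a nearby space $Y$, since crude thickening increases overlaps. I would get around this by extracting a uniform separation constant: for each $(n+2)$-element index set $\{i_0,\dots,i_{n+1}\}$, order $\le n$ forces $F_{i_0}\cap\dots\cap F_{i_{n+1}}=\emptyset$, so the continuous function $x\mapsto\max_j d(x,F_{i_j})$ is strictly positive on the compact space $X$; taking the least of its (positive) minima over the finitely many such index sets produces $\tau>0$ such that no point of $X$ lies within $\tau$ of all of any $n+2$ of the $F_i$.

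Now suppose $d_{GH}(X,Y)<\eta$, and embed $X,Y$ isometrically into some $Z$ with $d_{H,Z}(X,Y)<\eta$. Put $U_i^Y:=\{y\in Y: d_Z(y,F_i)<\eta\}$. These are open in $Y$ and cover $Y$ (each $y$ is within $\eta$ of some $x\in X$, and $x$ lies in some $F_i$, whence $y\in U_i^Y$); their diameters are at most $\operatorname{diam}(F_i)+2\eta$; and an $(n+2)$-fold overlap $y\in\bigcap_{j}U_{i_j}^Y$ would, via a point $x\in X$ with $d_Z(x,y)<\eta$, give $d(x,F_{i_j})<2\eta$ for all $j$, contradicting the choice of $\tau$ as soon as $2\eta<\tau$. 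Hence taking $\eta_0:=\tfrac12\min\bigl(\tau,\;1/k-\operatorname{mesh}(\cU)\bigr)>0$, every $Y$ with $d_{GH}(X,Y)<\eta_0$ inherits the cover $\{U_i^Y\}$ of mesh $<1/k$ and order $\le n$, so the $\eta_0$-ball about $X$ lies in $V_k$ and $V_k$ is open. Therefore $\cM_{\le n}=\bigcap_k V_k$ is a dense $G_{\delta}$, and the quoted Baire theorem makes it a Baire space. I expect the order-preservation step — isolating $\tau$ and verifying it survives the Gromov–Hausdorff approximation — to be the only genuine difficulty, the rest being bookkeeping; it is worth noting that $\dim$ is neither upper nor lower semicontinuous on $\cM$ (finite sets of dimension $0$ approximate cubes, while shrinking intervals of dimension $1$ approximate a point), which is exactly why $G_{\delta}$, rather than openness or closedness, is the right target.
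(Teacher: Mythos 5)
Your proof is correct, and it follows the same overall strategy as the paper --- characterize $\dim X\le n$ via finite open covers of small mesh and order $\le n$, then show the relevant sets of isometry classes are open under Gromov--Hausdorff perturbation by thickening a witnessing cover --- but the decomposition into open sets is organized differently. The paper writes $\cM_{\le n}=\bigcap_N\bigcup_M\cM^N_M$, where $\cM^N_M$ builds a quantitative separation condition (pairwise ``$(n{+}2)$-fold gaps'' of size $>\tfrac1M$) directly into the definition, and then argues that each $\cM^N_M$ is open; the openness argument there is left rather sketchy (``if $\delta$ is sufficiently small we can ensure\dots''). You instead take the single-quantifier sets $V_k$ (mesh $<1/k$, order $\le n$, no separation hypothesis) and prove $V_k$ itself is open, which requires the extra step you supply: pass to a closed shrinking $\{F_i\}$ and extract, by compactness, a uniform constant $\tau>0$ so that no point is $\tau$-close to $n+2$ of the $F_i$ simultaneously. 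That step is exactly the content the paper's parameter $M$ is meant to encode, so the two arguments are morally identical; what your version buys is a cleaner quantifier structure ($\cM_{\le n}$ as a plain countable intersection of open sets, with no inner union needed), only the qualitative form of Engelking's characterization of covering dimension, and a fully explicit openness estimate ($2\eta<\tau$ and $2\eta<1/k-\operatorname{mesh}(\cU)$) where the paper gestures at one. The density argument via finite $\varepsilon$-nets is the same in both. One cosmetic point: when the witnessing cover has fewer than $n+2$ elements, or some $F_i$ is empty, the definition of $\tau$ degenerates harmlessly, but it is worth a sentence to dismiss these cases.
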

\begin{proof}
  The density claim follows from the fact that the set of finite metric spaces (and hence also $\cM_{\le 0}$) is dense in $\cM$.
  
  Fix positive integers $M$, $N$ and let $\cM^{N}_{M}\subset \cM$ be the set of metric spaces $(X,d)$ admitting some finite open cover $\cU=\{U_i\}$ such that
  \begin{itemize}
  \item $\cU$ has {\it mesh} $<\frac 1N$, i.e. the supremum of the diameters $\mathrm{diam}\; U_i$ is $<\frac 1N$;
  \item for each $n+2$-element subset $\cV\subset \cU$ and each $U\in \cV$ we have
    \begin{equation*}
      \inf_{x\in U,y\in V}d(x,y)>\frac 1M
    \end{equation*}
    where
    \begin{equation*}
      V=\bigcap_{U'\in \cV,\ U'\ne U}U'.
    \end{equation*}
  \end{itemize}

  By the characterization of dimension for compact metric spaces given in \cite[Theorem 1.6.12]{eng-dim} we have
  \begin{equation*}
    \cM_{\le n} = \bigcap_{N\to \infty}\bigcup_{M\to \infty}\cM^{N}_{M}.
  \end{equation*}
  Since the intersection can be indexed by reciprocals $\varepsilon=\frac 1N$ of positive integers as $N\to \infty$, the conclusion will follow once we prove

  {\bf Claim: $\cM^{N}_{M}$ is open in $\cM$.} To verify this, let $(X,d_X)\in \cM^N_M$ and consider a cover $\cU=\{U_i\}$ as in the definition of the latter. Let $\delta>0$ (more on its size later) and suppose the elements
  \begin{equation*}
    (X,d_X),(Y,d_Y)\in \cM
  \end{equation*}
  are isometrically embedded in a compact metric space $(Z,d_Z)$ and $\delta$-Hausdorff close therein. The open subsets
  \begin{equation*}
    U_i^{\delta}:=\{z\in Z\ |\ d_Z(z,U_i)<\delta\}\subseteq Z
  \end{equation*}
  will then cover $Y\subseteq Z$. 

  If $\delta=\delta(X,d_X)$ is sufficiently small then we can ensure that every intersection
  \begin{equation*}
    U^{\delta}_{i_0}\cap\cdots\cap U^{\delta}_{i_t}\subset Z,\ 1\le t\le n
  \end{equation*}
  of at most $n+1$ open sets is contained in the $\delta'$-neighborhood of the corresponding intersection 
  \begin{equation*}
    U_{i_0}\cap\cdots\cap U_{i_t}\subset X
  \end{equation*}
  for arbitrarily small $\delta'>0$ (that would have to be fixed {\it before} $\delta$, $Y$, $Z$, etc.).

  In turn, requiring $\delta'>0$ sufficiently small would ensure that the open cover of $Y$ by $V_i:=U_i^{\delta}\cap Y$ satisfies the two requirements in the definition of $\cM^N_M$ and hence witnesses $Y$'s membership in that subset of $\cM$.
\end{proof}

In particular:

\begin{corollary}
  The subspace $\cM_0\subset \cM$ is $G_{\delta}$ and hence Baire. 
  \qedhere
\end{corollary}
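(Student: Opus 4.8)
The plan is to read this off from \Cref{pr.m0baire} specialized to $n=0$; the only point requiring attention is the passage from the inequality $\dim\le 0$ to the equality $\dim=0$ built into the notation $\cM_0$. \Cref{pr.m0baire} with $n=0$ already asserts that $\cM_{\le 0}$ is a $G_{\delta}$ subset of $\cM$, so it suffices to check that $\cM_0$ coincides with $\cM_{\le 0}$, or at worst differs from it by a set harmless for the $G_{\delta}$ property.

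First I would note that, by the convention in \Cref{def.dim}, the only compact metric space of dimension $-1$ is the empty one, while every nonempty compact metric space has dimension $\ge 0$. Since the points of $\cM$ are isometry classes of nonempty compact metric spaces, the constraints $\dim\le 0$ and $\dim=0$ therefore cut out exactly the same subset, i.e. $\cM_0=\cM_{\le 0}$, which is $G_{\delta}$ by the proposition. If one instead insists on treating $\emptyset$ as a point of $\cM$, it is an isolated point, so $\{\emptyset\}$ is clopen and $\cM_0=\cM_{\le 0}\setminus\{\emptyset\}$ is the intersection of a $G_{\delta}$ set with a clopen set, hence still $G_{\delta}$.

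With $\cM_0$ identified as a $G_{\delta}$ subset of $\cM$, the Baire assertion follows at once: $(\cM,d_{GH})$ is complete, and by the extension of the Baire category theorem to $G_{\delta}$ subsets of complete metric spaces recalled just above \Cref{pr.m0baire}, every such subset is itself a Baire space. There is no substantive obstacle here: the entire content of the corollary is the $n=0$ instance of \Cref{pr.m0baire}, and the only thing to verify is the trivial bookkeeping surrounding the dimension-$(-1)$ convention for the empty space.
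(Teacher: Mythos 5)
Your proof is correct and matches the paper's (implicit) reasoning: the corollary is stated with no proof precisely because it is the $n=0$ instance of \Cref{pr.m0baire}, modulo the trivial observation that $\cM_0$ and $\cM_{\le 0}$ agree up to the empty space. Your handling of that bookkeeping point and of the Baire assertion via $G_\delta$ subsets of complete metric spaces is exactly what is intended.
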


\subsection{Compact quantum groups and actions}\label{subse.cqg}

For the material in this subsection we refer for instance to \cite{Van,Pseudogroup,Wor98,KusTus99}, recalling only skeletal background here. We will also need the very basics of Hopf algebra theory, for which \cite{swe,mntg,abe,rad} constitute good references. 

\begin{definition}\label{def:cqg}
  A {\it compact quantum group} is a unital $C^*$-algebra $Q$ equipped with a $C^*$ morphism $\Delta:Q\to Q\otimes Q$ which
  \begin{itemize}
  \item is {\it coassociative} in the sense that
    \begin{equation*}
 \begin{tikzpicture}[auto,baseline=(current  bounding  box.center)]
  \path[anchor=base] (0,0) node (1) {$Q$} 
  +(5,0) node (2) {$Q\otimes Q\otimes Q$}
  +(2,.5) node (u) {$Q\otimes Q$}
  +(2,-.5) node (d) {$Q\otimes Q$}
  ;
  \draw[->] (1) to[bend left=6] node[pos=.5,auto] {$\scriptstyle \Delta$} (u);
  \draw[->] (1) to[bend right=6] node[pos=.5,auto,swap] {$\scriptstyle \Delta$} (d);
  \draw[->] (u) to[bend left=6] node[pos=.5,auto] {$\scriptstyle \Delta\otimes \id$} (2);
  \draw[->] (d) to[bend right=6] node[pos=.5,auto,swap] {$\scriptstyle \id\otimes \Delta$} (2);
 \end{tikzpicture}
\end{equation*}
commutes;
\item the spans
  \begin{equation*}
    \Delta(Q)(\bC\otimes Q)\quad\text{and}\quad \Delta(Q)(Q\otimes \bC)
  \end{equation*}
  of the respective products are dense in $Q\otimes Q$. 
  \end{itemize}
\end{definition}

In general, we regard unital $C^*$-algebras as objects dual to {\it compact quantum spaces}; this terminology will be in use throughout. For that reason, we will write $Q=C(G)$ (and refer to $G$ as the compact quantum group) to emphasize that the compact quantum group $G$ is to be regarded as dual to its algebra $Q$ of continuous functions.

A compact quantum group $C(G)$ has a unique dense $*$-subalgebra
\begin{equation*}
  \cO(G)\subseteq C(G)
\end{equation*}
that becomes a Hopf $*$-algebra when equipped with the comultiplication $\Delta$ inherited from $C(G)$; in particular, this means that
\begin{equation*}
  \Delta(\cO(G))\subset \cO(G)\otimes \cO(G)\subset C(G)\otimes C(G);
\end{equation*}
see for instance \cite[Theorem 3.1.7]{KusTus99}. The antipode $\kappa$ of $\cO(G)$ need not extend continuously to $C(G)$. 

$C(G)$ has a {\it Haar state} $h:C(G)\to \bC$, left and right $G$-invariant (as the Haar measure is on classical compact groups) in the sense that
\begin{equation*}
  h*\varphi = h = \varphi*h,\ \forall \text{ states }\varphi\text{ on }C(G)
\end{equation*}
where
\begin{equation*}
 \begin{tikzpicture}[auto,baseline=(current  bounding  box.center)]
  \path[anchor=base] (0,0) node (1) {$C(G)$} 
  +(6,0) node (2) {$\bC$}
  +(3,1) node (u) {$C(G)\otimes C(G)$}
  ;
  \draw[->] (1) to[bend right=6] node[pos=.5,auto,swap] {$\scriptstyle \varphi*\psi$} (2);
  \draw[->] (1) to[bend left=6] node[pos=.5,auto] {$\scriptstyle \Delta$} (u);
  \draw[->] (u) to[bend left=6] node[pos=.5,auto] {$\scriptstyle \varphi\otimes \psi$} (2);
 \end{tikzpicture}
\end{equation*}
defines the convolution product on functionals on $C(G)$.

\begin{definition}
  Let $G$ be a compact quantum group.
  
  $C(G)$ is {\it reduced} if the Haar state $h:C(G)\to \bC$ is faithful.

  $C(G)$ is {\it full} if the map $\cO(G)\to C(G)$ is the $C^*$ envelope of the complex $*$-algebra $\cO(G)$.

  For arbitrary $G$ we write $C(G)_r$ for the {\it reduced version} of $G$, i.e. the image of the GNS representation of the Haar state $h:C(G)\to \bC$, and $C(G)_u$ for the {\it full (or universal) version} of $G$, i.e. the $C^*$ envelope of $\cO(G)$ (such an envelope exists for every $G$).
\end{definition}

Note that we have quantum group morphisms
\begin{equation}\label{eq:sandw}
  C(G)_u\to C(G)\to C(G)_r. 
\end{equation}

\begin{definition}\label{def:act}
  An {\it action} of a compact quantum group $(Q,\Delta)$ on a compact quantum space $A$ is a $C^*$-morphism
  \begin{equation}\label{eq:rho}
    \rho:A\to A\otimes Q
  \end{equation}
  such that
  \begin{itemize}
  \item     \begin{equation*}
      \begin{tikzpicture}[auto,baseline=(current  bounding  box.center)]
        \path[anchor=base] (0,0) node (1) {$A$} 
        +(5,0) node (2) {$A\otimes Q\otimes Q$}
        +(2,.5) node (u) {$A\otimes Q$}
        +(2,-.5) node (d) {$A\otimes Q$}
        ;
        \draw[->] (1) to[bend left=6] node[pos=.5,auto] {$\scriptstyle \rho$} (u);
        \draw[->] (1) to[bend right=6] node[pos=.5,auto,swap] {$\scriptstyle \rho$} (d);
        \draw[->] (u) to[bend left=6] node[pos=.5,auto] {$\scriptstyle \rho\otimes \id$} (2);
        \draw[->] (d) to[bend right=6] node[pos=.5,auto,swap] {$\scriptstyle \id\otimes \Delta$} (2);
      \end{tikzpicture}
    \end{equation*}
    commutes;
  \item the span $\rho(A)(\bC\otimes Q)$ is dense in $A\otimes Q$.
  \end{itemize}

  If furthermore the span of 
  \begin{equation*}
    \{(\varphi\otimes \id)\rho(a)\ |\ a\in A,\ \varphi\text{ a state on }Q\}\subset Q
  \end{equation*}
  is dense then the action is {\it faithful}. 
\end{definition}

Composing
\begin{equation*}
 \begin{tikzpicture}[auto,baseline=(current  bounding  box.center)]
   \path[anchor=base] (0,0) node (1) {$A$} 
   +(3,0) node (2) {$A\otimes C(G)$}
   +(6,0) node (3) {$A\otimes C(G)_r$}
   ;
   \draw[->] (1) to[bend left=0] node[pos=.5,auto] {$\scriptstyle \rho$} (2);
   \draw[->] (2) to[bend left=0] node[pos=.5,auto] {$\scriptstyle \id\otimes \pi$} (3);
 \end{tikzpicture}
\end{equation*}
with $\pi:C(G)\to C(G)_r$ the canonical surjection from \Cref{eq:sandw} produces an action of the reduced version $C(G)_r$, so if needed we can always assume that an acting quantum group is reduced.

Throughout the present paper we in fact only work with {\it classical} spaces $A$, i.e. $A=C(X)$ (continuous functions) for some compact Hausdorff $X$. It can be shown \cite[Theorem 3.16]{Hua12} that for $G$ acting faithfully on $X$ the antipode $\kappa$ of the Hopf $*$-algebra $\cO(G)$ extends continuously to $C(G)_r$ (so $G$ is {\it of Kac type}, in standard terminology). For that reason, we will henceforth assume all of our compact quantum groups $C(G)$ come equipped with antipodes $\kappa$.

We are interested primarily in compact metric spaces $(X,d)$. In that context, the relevant notion of structure-preserving quantum-group action was introduced in \cite[Definition 3.1]{metric}:

\begin{definition}\label{def:qiso}
  Let $(X,d)$ be a compact metric space, $A=C(X)$ and $\rho:A\to A\otimes Q$ a compact-quantum-group action on $X$. $\rho$ is {\it isometric} if we have
  \begin{equation*}
    \rho(d_x)(y) = \kappa(\rho(d_y)(x)) \in Q
  \end{equation*}
   for all pairs of points $x,y\in X$, where $\kappa:Q\to Q$ is the antipode. 
\end{definition}

Finally, we can give

\begin{definition}\label{def:univ}
  Let $(X,d)$ be a compact metric space and $A=C(X)$. An isometric action \Cref{eq:rho} is {\it universal} if every isometric action $\rho':A\to A\otimes H$ factors as
\begin{equation*}
 \begin{tikzpicture}[auto,baseline=(current  bounding  box.center)]
  \path[anchor=base] (0,0) node (1) {$A$} 
  +(4,0) node (2) {$A\otimes Q$}
  +(2,1) node (u) {$A\otimes H$}
  ;
  \draw[->] (1) to[bend right=6] node[pos=.5,auto,swap] {$\scriptstyle \varphi*\psi$} (2);
  \draw[->] (1) to[bend left=6] node[pos=.5,auto] {$\scriptstyle \id\otimes \eta$} (u);
  \draw[->] (u) to[bend left=6] node[pos=.5,auto] {$\scriptstyle \rho'$} (2);
 \end{tikzpicture}
\end{equation*}
for a unique compact quantum group morphism $\eta:Q\to H$.

If such a universal action exists we say that $(X,d)$ {\it has a quantum isometry group}. 
\end{definition}

\section{Countable metric spaces}\label{se.qiso}

\begin{theorem}\label{th.cnt}
  A countable compact metric space $(X,d)$ has a compact quantum isometry group.   
\end{theorem}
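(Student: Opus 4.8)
The plan is to exhaust $C(X)$ by a canonical increasing chain of finite-dimensional, isometry-invariant subalgebras, apply the known finite case on each, and pass to a limit. Since $X$ is a countable compact metric space it contains no nondegenerate connected subset, hence is totally disconnected and therefore $0$-dimensional (\Cref{def.dim} and the remarks after it). For each $\varepsilon>0$ call $x,y\in X$ \emph{$\varepsilon$-chained} if there is a finite sequence $x=z_0,z_1,\dots,z_k=y$ with $d(z_i,z_{i+1})<\varepsilon$; the equivalence classes are open, and by compactness there are only finitely many, so they constitute a finite clopen partition $\cU_\varepsilon$ of $X$ whose distinct blocks are pairwise at distance $\ge\varepsilon$. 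Total disconnectedness forces these partitions to separate the points of $X$ as $\varepsilon\downarrow 0$. Writing $A_\varepsilon\subseteq C(X)$ for the finite-dimensional subalgebra of functions constant on the blocks of $\cU_\varepsilon$, the $A_\varepsilon$ increase as $\varepsilon$ decreases, and $\bigcup_\varepsilon A_\varepsilon$ is unital, self-adjoint and point-separating, hence dense in $C(X)$ by Stone--Weierstrass.

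Each block set $X_\varepsilon:=X/\cU_\varepsilon$ carries a finite amount of metric data (the inter-block distance profiles, together with the refinement data linking different scales), and this is precisely the bounded/finite situation underlying the finite case of \Cref{th:orig}: on the finite-dimensional $A_\varepsilon$ the coefficients of any coaction are bounded by finiteness, so a universal compact quantum group $Q_\varepsilon$ preserving the scale-$\varepsilon$ data exists. Refinement of partitions produces compatible morphisms $Q_{\varepsilon'}\to Q_\varepsilon$ for $\varepsilon'<\varepsilon$ (a symmetry respecting the finer data respects the coarser, so universality of $Q_\varepsilon$ supplies the arrow), making $\{Q_\varepsilon\}$ a projective system of compact quantum groups. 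Dually $\{\cO(Q_\varepsilon)\}$ is an inductive system of Hopf $*$-algebras; I set $\cO(Q):=\varinjlim_\varepsilon\cO(Q_\varepsilon)$ and take $Q$ to be its $C^*$-envelope, so that $Q=\varprojlim_\varepsilon Q_\varepsilon$. The point is that this envelope \emph{exists}: every element lies in some $\cO(Q_\varepsilon)$, which already carries the bounded enveloping $C^*$-norm of the finite-scale quantum group, and the connecting maps are $*$-morphisms, so the enveloping norm is finite on the limit. This is exactly where countability, through finiteness at each scale, buys boundedness of the generators — the usual obstruction to existence.

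It then remains to identify $Q$ with the quantum isometry group of $X$ in the sense of \Cref{def:univ}. The compatible finite-level coactions $A_\varepsilon\to A_\varepsilon\otimes Q_\varepsilon\to A_\varepsilon\otimes Q$ assemble into a coaction of $Q$ on the dense subalgebra $\bigcup_\varepsilon A_\varepsilon$, which extends by continuity to $\rho:C(X)\to C(X)\otimes Q$; isometry of $\rho$ (\Cref{def:qiso}) is inherited because it holds at every finite scale and both sides of the identity $\rho(d_x)(y)=\kappa(\rho(d_y)(x))$ are continuous in the points and the scale. For universality I take an arbitrary isometric action $\rho':C(X)\to C(X)\otimes H$, restrict it to each $A_\varepsilon$ to obtain finite-scale isometric coactions, invoke universality of each $Q_\varepsilon$ to get compatible morphisms $Q_\varepsilon\to H$, and pass to the limit to obtain the required $\eta:Q\to H$; uniqueness follows from density of $\bigcup_\varepsilon A_\varepsilon$.

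The step I expect to be the main obstacle is the one used silently above: that an \emph{arbitrary} isometric action $\rho'$ preserves the canonical subalgebras, $\rho'(A_\varepsilon)\subseteq A_\varepsilon\otimes H$, so that it restricts to the finite scales at all. Classically this is merely the statement that isometries permute $\varepsilon$-chained components; quantum-mechanically it must be extracted from \Cref{def:qiso}. The route I would take is to reformulate the isometric condition as invariance of the ``distance element'' $d\in C(X)\otimes C(X)$ under $\rho'$, and then note that the block indicators of $\cU_\varepsilon$ are built from $d$ by a continuous chain/functional-calculus construction that $\rho'$ must respect, forcing their images into $A_\varepsilon\otimes H$. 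A secondary point requiring care is that the retained scale-$\varepsilon$ structures are rich enough that their inductive limit recovers the full isometric condition and introduces no spurious symmetries; this is guaranteed by the facts that the block partitions separate points and that the inter-block distance data converges to $d$ as $\varepsilon\downarrow 0$.
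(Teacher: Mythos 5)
Your construction is genuinely different from the paper's: the paper does not use the $\varepsilon$-chain filtration at all, but instead runs the Cantor--Bendixson process (repeatedly deleting the isolated points, which a countable compact space must have), obtaining a transfinite decomposition of $X$ into \emph{finite} pieces each of which is invariant under every quantum isometric action; it then assembles canonical invariant measures on these finite pieces into an invariant probability measure $\mu$ on $X$ and invokes the known existence of a universal object in the \emph{measured} category (\cite[Theorem 5.4]{cg}). Your route avoids the measure entirely and tries to build the universal object as an inverse limit of finite-scale quantum symmetry groups. A useful sanity check immediately raises a flag: your argument uses countability only to conclude that $X$ is totally disconnected, so if it worked it would prove that \emph{every} $0$-dimensional compact metric space (e.g.\ any metrized Cantor set) has a quantum isometry group. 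The paper conspicuously does not claim this --- its proof genuinely needs $X$ to be scattered so that the Cantor--Bendixson process terminates in finite pieces and produces the invariant measure --- which strongly suggests that the steps you defer are not technicalities but the actual mathematical content.

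Concretely, two essential steps are asserted rather than proved. First, you need that an arbitrary isometric action $\rho'$ satisfies $\rho'(A_\varepsilon)\subseteq A_\varepsilon\otimes H$, i.e.\ that quantum isometric actions permute the $\varepsilon$-chain components. The sketch via ``invariance of the distance element plus functional calculus'' is not an argument: the isometry condition of \Cref{def:qiso} is a pointwise identity $\rho(d_x)(y)=\kappa(\rho(d_y)(x))$, not the invariance of an element of a tensor square, and extracting invariance of subsets defined by an \emph{existential} condition (existence of short chains) is precisely the kind of statement that requires a dedicated proof (the paper cites \cite[Theorem 3.1]{Chi15} for the much more tractable closed conditions defining $X_{\ge r}$). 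Second, even granting this, your $Q$ is by construction universal for actions preserving all finite-scale block data, and you must show its canonical action on $C(X)$ is itself \emph{isometric}; the finite-scale conditions only constrain inter-block distance profiles, which approximate $d$ without determining it, so ``it holds at every finite scale and pass to the limit'' does not close the argument --- if $Q$'s action fails to be isometric, $Q$ is not the quantum isometry group but a strictly larger object with no universal property in the relevant category. Until both of these are supplied, the proof is incomplete, and the second point in particular is exactly the kind of obstruction (spurious symmetries of a universal limit object) that makes the general existence problem hard.
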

\begin{proof}
Consider an isometric action of a CQG $G$ on $(X,d)$. 
  
  Being countable, $X$ must contain isolated points. Each isolated point, in turn, is contained in one of the finite sets
  \begin{equation*}
    X_{\ge r}:=\{x\in X\ |\ d(x,y)\ge r,\ \forall x\ne y\in X\} 
  \end{equation*}
  for some $r>0$ (this is the set of points which admit no neighbors at a distance smaller than $r$). \cite[Theorem 3.1]{Chi15}, for instance, makes it clear that each $X_{\ge r}$, $r\ge 0$ is preserved by the action of $G$. 

  Letting $r\to 0$, we see that the action of $G$ leaves invariant the entire set
  \begin{equation*}
    X_{\mathrm{iso}}=\bigcup_{r\to 0}X_{\ge r}
  \end{equation*}
  of isolated points. That set is open because each $X_{\ge r}$ is, so the compact countable metric space $X\setminus X_{\mathrm{iso}}$ is again preserved. Now repeat the procedure with the latter space eliminating {\it its} isolated points, and so on. This transfinite recursive procedure, which must terminate after countably many steps, will partition the original metric space $X$ into countably many {\it finite} subspaces preserved by $G$. Since these spaces do not depend on $G$ or the action but are rather intrinsic to $X$, every isometric action will preserve them. 

  To conclude, denote by
  \begin{equation*}
    X_0\subset X_1\subset\cdots\subset X_{\alpha} = X
  \end{equation*}
  be the chain of invariant subspaces constructed above, where
  \begin{itemize}
  \item $\alpha$ is some countable ordinal;
  \item for limit ordinals $\beta$ the space $X_{\beta}$ is the closure of $\cup_{\beta'<\beta}X_{\beta'}$;
  \item for successor ordinals $X_{\beta+1}$ the space
    \begin{equation}\label{eq:findif}
      X_{\beta+1}\setminus X_{\beta}
    \end{equation}
    is finite. 
  \end{itemize}
  We can find a probability measure $\mu$ on $X$ by collecting measures on the finite spaces \Cref{eq:findif} invariant under the quantum isometry groups of the latter, whereupon $\mu$ will be invariant under any isometric action on $(X,d)$ of a compact quantum group. Since we know (e.g. by \cite[Theorem 5.4]{cg}) that there is a universal compact quantum group acting isometrically on $(X,d)$ {\it and} preserving $\mu$, this proves the desired conclusion.
\end{proof}

We will need the following notion.

\begin{definition}\label{def.lo-emb}
  Let $(X,d_X)$ and $(Y,d_Y)$ be two metric spaces. A {\it loose} (or {\it loosely isometric}) {\it embedding} $X\to Y$ is a one-to-one continuous map $f:X\to Y$ with the property that for every $x,x',z$ and $z'$ in $X$ we have
  \begin{equation}\label{eq:le}
    d_Y(fx,fx') = d_Y(fz,fz') \Leftrightarrow d_X(x,x') = d_X(z,z').
  \end{equation}

  We say that $(X,d)$ is {\it loosely embeddable} (or LE for short) if there is a loose embedding into some Euclidean space $\bR^n$ with its usual distance function, typically denoted by
  \begin{equation*}
    |x-y| := d_{\bR^n}(x,y) = \left(\sum_{i=1}^n (x_i-y_i)^2\right)^{\frac 12}. 
  \end{equation*}
\end{definition}

A slight generalization of \cite[Corollary 4.9]{metric} says that loosely embeddable compact metric spaces have compact quantum automorphism groups. \Cref{ex.pth} shows that not every countable compact metric space is loosely embeddable, and hence not all spaces covered by \Cref{th.cnt} fall within the scope of that result.


\begin{example}\label{ex.pth}
  One can easily construct countable compact metric spaces $(X,d)$ such that for every $n$, $X$ contains {\it regular $n$-simplices}, i.e. $(n+1)$-tuples of equidistant points $x_0$, $x_1$ up to $x_n$. Such a space cannot admit a loose embedding into any Euclidean space $\bR^d$, since the latter cannot house a regular simplex with more than $d+1$ vertices.
\end{example}

\begin{remark}
  Contrast \Cref{ex.pth} to the fact that by \cite[Corollary 3]{dm} {\it finite} metric spaces are always loosely embeddable. 
\end{remark}


\section{Loose metric embeddability}\label{se:loose}

It is a natural problem, in view of \Cref{ex.pth} and the discussion preceding it, to determine to what extent various classes of metric spaces are loosely embeddable. Clearly, loose embeddability of a compact metric space in $\bR^n$ entails covering dimension $\le n$ (e.g. \cite[\S 1.6]{eng-dim} and \cite[Theorem 3.1.19]{eng-dim}, which applies to compact metric spaces). On the other hand, we will prove that if the dimension is zero then loose embeddability holds ``generically'' with respect to the Gromov-Hausdorff distance.

\begin{theorem}\label{th.gen0}
  The isometry classes of $0$-dimensional compact metric spaces loosely embeddable in $\bR$ is a residual set in $(\cM_0,d_{GH})$. 
\end{theorem}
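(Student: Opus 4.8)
The plan is to isolate a concrete residual subset of $\cM_0$ all of whose members are loosely embeddable in $\bR$. The natural candidate is the set $\cD\subseteq\cM_0$ of spaces with \emph{pairwise distinct distances}, meaning that the distance function is injective on unordered pairs of distinct points: $d(x,x')=d(z,z')>0$ forces $\{x,x'\}=\{z,z'\}$. I would prove two things: (a) $\cD$ is residual in $\cM_0$, and (b) every member of $\cD$ is LE in $\bR$. Granting both, the inclusion $\cD\subseteq\{\text{LE spaces}\}$ together with the residuality of $\cD$ forces the LE spaces to be residual, which is exactly the assertion.

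For (b), the key point is that loose embeddability of a distinct-distance space reduces to a purely topological embedding into a sufficiently generic subset of $\bR$. Concretely, I would first produce a single \emph{distinct-distance Cantor set} $K\subseteq\bR$, i.e.\ a set homeomorphic to $\{0,1\}^{\bN}$ all of whose pairwise distances are distinct. Since every $0$-dimensional compact metric space embeds as a closed (hence compact) subspace of the Cantor set, any $X\in\cD$ is homeomorphic to a closed subset $X'\subseteq K$, and as a subset of $K$ the space $X'$ again has all distances distinct. The composite $f\colon X\xrightarrow{\ \cong\ }X'\hookrightarrow\bR$ is then a loose embedding: because both $X$ (by hypothesis) and $f(X)=X'$ have distinct distances and $f$ is injective, each side of \eqref{eq:le} holds precisely when the two unordered pairs involved coincide, with the degenerate zero-distance cases handled directly by injectivity. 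This reduces (b) to the existence of $K$.

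The construction of $K$ is the main obstacle. I would build it as a tree of nested intervals indexed by $\{0,1\}^{<\bN}$, with a positive ``offset'' $c_s$ governing the gap at each node $s$; a point of $K$ is an infinite path, and the difference of two points separating at a node $t$ equals $c_t$ plus a correction assembled from offsets strictly below $t$. Choosing the offsets to decrease fast enough that the offset at any node dominates the total offset mass of its subtree makes $K$ a genuine Cantor set and assigns each distance a well-defined dominant level. To exclude coincidences I would take the offsets generically, e.g.\ $\bQ$-linearly independent, and use the dominant-level structure to collapse each a-priori infinite linear relation among offsets arising from a hypothetical coincidence to a finite one, which independence forbids; the one genuinely recursive point is to arrange that the difference sets of two sibling subtrees meet only at $0$, which is once more a countable family of conditions satisfiable by generic offsets.

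For (a) I would use that $\cM_0$ is Baire (the corollary to \Cref{pr.m0baire}) and that finite metric spaces, being $0$-dimensional, are dense in $\cM$ and hence in $\cM_0$; a small perturbation makes the finitely many distances of such a space distinct while preserving the triangle inequalities, so $\cD$ is dense. It then suffices to show $\cD^{c}$ is meager, and I would write $\cD^{c}=\bigcup_{k\ge 1}A_{1/k}$, where $A_\varepsilon$ consists of the $X$ admitting points $x_1,x_2,x_3,x_4$ with $d(x_1,x_2)=d(x_3,x_4)\ge\varepsilon$ and with the two-point sets $\{x_1,x_2\},\{x_3,x_4\}$ at Hausdorff distance $\ge\varepsilon$ in $X$. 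Every genuine coincidence lies in some $A_{1/k}$, since distinct pairs of distinct points are at positive Hausdorff distance and have positive common distance. Each $A_\varepsilon$ is closed (along a convergent sequence the witnessing quadruples converge, by compactness and convergence of the correspondences, to a quadruple satisfying the closed defining conditions) and nowhere dense (near any target choose a finite distinct-distance $X'$ whose positive distances are $\gamma$-separated; any $X''$ that is $\rho$-close for $\rho<\min(\varepsilon/2,\gamma/4)$ cannot lie in $A_\varepsilon$, as an $\varepsilon$-large, $\varepsilon$-separated coincidence would transport under the correspondence to two distinct pairs of $X'$ with distances within $2\rho<\gamma$ of a common value, contradicting $\gamma$-separation). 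Thus $\cD^{c}$ is a countable union of closed nowhere-dense sets and $\cD$ is residual. The only delicate ingredient is the coincidence-free Cantor set $K$ of (b); everything else is routine Baire-category bookkeeping once $K$ is in hand.
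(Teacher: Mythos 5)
Your proposal follows essentially the same two-step strategy as the paper: first show that the $0$-dimensional compact metric spaces with injective distance function form a residual subset of $\cM_0$, then show that every such space is loosely embeddable in $\bR$ via a recursive tree of generically chosen nested intervals. The only differences are cosmetic repackagings --- you prove residuality by exhibiting the complement as a countable union of closed nowhere dense sets $A_{1/k}$ rather than writing the injective-distance spaces as $\bigcap_N\bigcup_M$ of open sets defined by clopen partitions, and you factor the embedding through a single universal distinct-distance Cantor set $K\subseteq\bR$ rather than running the generic-interval construction directly on $X$ --- and both variants are correct.
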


We need some preparation.

\begin{definition}\label{def.inj}
  A distance function $d$ on a metric space $X$ is {\it injective} if its restriction to the off-diagonal set
  \begin{equation*}
    X\times X\setminus \Delta=\{(x,y)\in X\times X\ |\ x\ne y\}
  \end{equation*}
  is one-to-one.   
\end{definition}

First, note the following sufficient criterion for loose embeddability in $\bR$.

\begin{proposition}\label{pr.inj-suff}
  A $0$-dimensional compact metric space $(X,d)\in \cM_0$ with injective $d$ is loosely embeddable in $\bR$.
\end{proposition}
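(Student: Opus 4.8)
The plan is to first unravel what a loose embedding into $\bR$ must look like when $d$ is injective, and then to produce one by routing $X$ through a linearly independent Cantor set.

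First I would record a reformulation. Since $d$ is injective (\Cref{def.inj}), for off-diagonal pairs the equality $d(x,x')=d(z,z')$ holds precisely when $\{x,x'\}=\{z,z'\}$. Feeding this into the defining equivalence \Cref{eq:le}, a continuous injection $f\colon X\to\bR$ is a loose embedding if and only if the pulled-back distance $d_f(x,x'):=|f(x)-f(x')|$ is again injective in the sense of \Cref{def.inj} — equivalently, the image $f(X)\subset\bR$ has all of its pairwise distances distinct. Indeed, the $\Leftarrow$ direction of \Cref{eq:le} is automatic once $f$ is a well-defined injection (equal $d$-values force either coincident pairs or $\{x,x'\}=\{z,z'\}$, both giving equal $f$-distances), while the $\Rightarrow$ direction is exactly the demand that distinct unordered pairs receive distinct nonzero $f$-distances, the zero case being covered by injectivity of $f$. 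So the task reduces to producing a continuous injection $f\colon X\to\bR$ whose image has pairwise distinct distances.

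Next I would reduce this to a single well-chosen target set. As $X$ is a $0$-dimensional compact metric space it embeds homeomorphically into the Cantor set $\cC$, so it suffices to exhibit one continuous injection of $\cC$ into $\bR$ with pairwise-distinct-distance image and restrict it along $X\hookrightarrow\cC$; a subset of a pairwise-distinct-distance set inherits the property, and a continuous injection of the compact space $X$ into $\bR$ is automatically an embedding, hence a legitimate map for \Cref{eq:le}. The cleanest sufficient condition on the image is $\bQ$-linear independence: if $K\subset\bR$ is linearly independent over $\bQ$, then a relation $x+z=x'+z'$ among its elements forces $\{x,z\}=\{x',z'\}$, whence $|x-x'|=|z-z'|$ can hold only for coincident pairs, i.e. $K$ has pairwise distinct distances. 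Thus the whole statement follows once I produce a Cantor set $K\subset\bR$ that is linearly independent over $\bQ$. This is classical: the dependence loci $\{\sum_i q_i x_i=0\}$, ranging over the countably many finite tuples of nonzero rationals $q_i$, are closed and nowhere dense in the relevant Euclidean powers, and a Mycielski-type fusion argument yields a perfect set avoiding all of them nontrivially; any perfect subset of $\bR$ contains a homeomorphic copy of $\cC$. Composing $X\hookrightarrow\cC\cong K\subset\bR$ delivers the desired $f$.

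The main obstacle is precisely the global distinctness of the (generically uncountably many) values $|f(x)-f(x')|$: it cannot be enforced pair-by-pair, since distinct pairs of points may lie arbitrarily close together in $X\times X$, so the naive nested-clopen-interval constructions that separate distances at any finite stage need not survive to the limit. Pushing $X$ into a $\bQ$-linearly independent perfect subset of $\bR$ sidesteps this by securing all coincidence-avoidances simultaneously, and the existence of such a perfect set is the one genuinely nontrivial input; everything else is the bookkeeping of the reformulation in the first paragraph.
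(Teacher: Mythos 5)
Your proof is correct, and although it opens with the same reformulation as the paper --- for injective $d$, a loose embedding into $\bR$ is exactly a continuous injection whose image has pairwise distinct distances --- the construction that follows takes a genuinely different route. The paper builds the image by hand: it partitions $X$ into clopen pieces of shrinking diameter, assigns them nested, mutually disjoint compact intervals chosen ``generically'' at each finite stage (so that cross-interval distances determine the pair of indices involved), and passes to the limit along infinite words. You instead factor through a universal target: embed $X$ topologically into the Cantor set $\cC$ (legitimate since $X$ is compact, metrizable and $0$-dimensional, hence has a basis of clopen sets) and realize $\cC$ inside $\bR$ as a $\bQ$-linearly independent perfect set $K$, noting that linear independence forces all pairwise distances in $K$ to be distinct: $|x-x'|=|z-z'|$ gives $x+z'=x'+z$ or $x+z=x'+z'$, and for independent points every way of resolving such a relation collapses to $\{x,x'\}=\{z,z'\}$. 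What your route buys is a clean separation of concerns --- the only $X$-dependent step is the purely topological embedding into $\cC$, and distance-injectivity is secured once and for all by a single universal object rather than by finite-stage genericity conditions whose persistence in the limit must be checked (a point the paper's own argument treats rather briskly). The cost is importing the existence of a $\bQ$-linearly independent Cantor set as a black box (Mycielski's theorem applied to the countably many nowhere dense hyperplanes $\sum_i q_i x_i=0$, or von Neumann's explicit construction); that fact is classical, and its usual proof is essentially the paper's nested-generic-interval argument run on $\cC$ itself, so the two proofs are cousins, but yours isolates the combinatorial heart (rational independence suffices) and is arguably easier to verify rigorously.
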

\begin{proof}
  The definition of loose embeddability simply requires a homeomorphism of $X$ onto a subset $Y\subset \bR$ so that the usual real line distance $d_{\bR}$ on $Y$ is injective. This will be a familiar clopen cover recursive ``branching'' procedure familiar in working with $0$-dimensional compact spaces:

In first instance, cover $X$ with disjoint clopen sets $U_i$ of diameter $\le 1$ and match them to mutually disjoint compact intervals $I_i\subset \bR$ of length $\le 1$. We arrange furthermore that the intervals $I_i$ are chosen {\it generically}, in the sense that if
\begin{align*}
  x\in I_i,\ &y\in I_j,\ i\ne j\\
  x'\in I_{i'},\ &y'\in I_{j'},\ i'\ne j'\\
\end{align*}
then
\begin{equation*}
  d_{\bR}(x,y)=d_{\bR}(x',y')\Rightarrow i=i'\text{ and }j=j'.
\end{equation*}
Next, cover each $U_i$ with finitely many disjoint clopen subsets $U_{ij}$ of diameter $\frac 12$ and choose corresponding disjoint compact sub-intervals $I_{ij}\subset I_i$ of length $\le \frac 12$, again ensuring that for each $i$ the family consisting of all $I_{ij}$ is generic in the above sense. Now continue the procedure, partitioning each $U_{ij}$ into clopen subsets $U_{ijk}$, etc.

For each infinite word $ijk\cdots$ we obtain a point
\begin{equation*}
  \{p\}=U_i\cap U_{ij}\cap U_{ijk}\cap\cdots\subset X
\end{equation*}
mapped by our embedding to the corresponding unique point
\begin{equation*}
  I_i\cap I_{ij}\cap I_{ijk}\cap\cdots.
\end{equation*}
The generic condition imposed on our intervals at each step then ensures that indeed the restriction of $d_{\bR}$ to the image of $X$ is injective in the sense of \Cref{def.inj}.
\end{proof}

\pf{th.gen0}
\begin{th.gen0}
In view of \Cref{pr.inj-suff}, it will suffice to prove the stronger claim that the collection of $(X,d)\in \cM_0$ with injective $d$ is residual. 

Let $M$ and $N$ be two positive integers and define $\cM_{N,M}\subset \cM_0$ be the collection of $0$-dimensional metric spaces $(X,d)$ admitting a partition into clopen subsets $U_i$ such that
\begin{itemize}
\item each $U_i$ has diameter $<\frac 1N$;
\item whenever the two-element sets $\{i,j\}$ and $\{i',j'\}$ are distinct we have
  \begin{equation*}
    |d(x,y)-d(x',y')|>\frac 1M
  \end{equation*}
  for all $x\in U_i$, $y\in U_j$ and similarly for primed symbols. 
\end{itemize}

$\cM_{N,M}$ is easily seen to be open in the Gromov-Hausdorff distance and the subset of $\cM_0$ consisting of injective-distance metric spaces is $\bigcap_N\bigcup_M \cM_{N,M}$. To conclude, we have to prove

{\bf Claim: $\bigcup_M\cM_{N,M}$ is dense in $\cM_0$.} This, however, is immediate: simply approximate an arbitrary compact metric space in the Gromov-Hausdorff topology by {\it finite} metric spaces, which can be chosen to have injective distance functions by effecting small perturbations on said distance functions if needed. 
\end{th.gen0}


\def\polhk#1{\setbox0=\hbox{#1}{\ooalign{\hidewidth
  \lower1.5ex\hbox{`}\hidewidth\crcr\unhbox0}}}

\addcontentsline{toc}{section}{References}

\Addresses

\end{document}